\newtheorem{thm}{Theorem}
\newtheorem{prop}{Proposition}
\newtheorem{rem}{Remark}
\begin{document}

\title[Outer derivations of some solvable Lie algebras]{Outer derivations of complex solvable Lie algebras with nilradical of maximal rank}
\author{K.K. Abdurasulov, R.Q. Gaybullayev. }
\address{[K.\ K.\ Abdurasulov] V.I.Romanovskiy Institute of Mathematics Uzbekistan Academy of Sciences, Tashkent 100170, Uzbekistan.} \email{abdurasulov0505@mail.ru}
\address{[R.\ Q.\ Gaybullayev] National University of Uzbekistan, Tashkent, Uzbekistan.} \email{r$_{-}$gaybullaev@mail.ru}

\begin{abstract} In the present paper we prove the existence outer derivations finite-dimensional solvable Lie algebras with nilradical of maximal rank and complementary subspace to nilardical of dimension less than rank of the nilradical.
\end{abstract}

\subjclass[2010]{17A32; 17A36; 17A60; 17A65; 17B30}

\keywords{Lie algebra, derivation, nilpotent algebra, radical, solvable algebra,}

\maketitle

%\section{Introduction}

Derivations play an important role in the theory of Lie algebras. They are two kinds of derivations (inner and outer derivations). It is know that every nilpotent Lie algebra over a field of arbitrary characteristic has an outer derivation, while any derivation of semisimple Lie algebras is inner \cite{jaci}. In the case of solvable Lie algebras the situation is more complicated. In the case of solvable Lie algebras we can not affirm innerness or outerness of derivations definitely.

In the paper \cite{xosiyat} study of solvable Lie algebras for which the dimension of the complementary space is equal to the number of generators of the nilradical.
It is easy to see that all derivetions of the algebra are inner.
%It is known that any derivation of solvable Lie algebras with nilradical of maximal rank and complementary subspace to nilardical of dimension equal to the rank is inner [QOBIL MAQOLASI DANGA BERGAN].
In the present note we prove that it is a unique case when solvable Lie algebras with nilradical of maximal rank have no outer derivations. In other words, we prove that any solvable Lie algebra with nilradical of maximal rank and complementary subspace to nilradical of dimension less than rank admits an outer derivation.

In order to begin our study we recall the description of solvable Lie algebras whose nilradical has maximal rank (that is, maximal torus of the nilradical has dimension equal to the number of generator basis elements of nilradical) and dimension of complementary subspace to nilradical is equal to the rank of nilradical.

\begin{thm}\label{thm1}\cite{xosiyat}
Let $R=N\oplus Q$ be a solvable Lie algebra such
that $dimQ=dim N/N^2=k.$ Then $R$ admits a basis $\{e_1, e_2, \dots, e_k, e_{k+1}, \dots,
e_n, x_1, x_2, \dots, x_k\}$ such that the table of multiplications of $R$ in the basis has the following form:
$$\left\{\begin{array}{ll}
[e_i,e_j]=\sum\limits_{t=k+1}^{n}\gamma_{i,j}^te_t,& 1\leq i, j\leq n,\\[1mm]
[e_i,x_i]=e_i,& 1\leq i\leq k,\\[1mm]
[e_i,x_j]=\alpha_{i,j}e_i,& k+1\leq i\leq n,\ \ 1\leq j\leq k,\\[1mm]
\end{array}\right.$$
where $\alpha_{i,j}$ is the number of entries of a generator basis
element $e_j$ involved in forming  non generator basis element
$e_i$.
\end{thm}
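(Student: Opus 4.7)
The plan is to diagonalize the adjoint action of $Q$ on $N$ using the theory of maximal tori of derivations. First, since $\dim N/N^{2}=k$, I would choose a generating set $e_{1},\ldots ,e_{k}$ of $N$ whose images form a basis of $N/N^{2}$, and extend it to a basis $e_{1},\ldots ,e_{n}$ of $N$ by iterated commutators spanning $N^{2}$, ordered compatibly with the lower central series. Since $[N,N]\subseteq N^{2}=\operatorname{span}(e_{k+1},\ldots ,e_{n})$, the relation $[e_{i},e_{j}]=\sum_{t=k+1}^{n}\gamma_{i,j}^{t}e_{t}$ holds automatically.

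Next, each element of $Q$ acts on $N$ as a derivation. The standard structure theory of complex solvable Lie algebras (Malcev--Mubarakzyanov) lets me replace $Q$, if necessary, by a complement whose image in $\operatorname{Der}(N)$ is an abelian subalgebra of semisimple derivations. Since this image has dimension $k$ equal to the rank of $N$, it is a maximal torus $T\subset\operatorname{Der}(N)$; hence $Q$ is abelian and the map $Q\to T$ is an isomorphism.

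Joint diagonalization then yields a weight decomposition of $N$: I may assume each $e_{i}$ is a joint eigenvector with weight $\alpha_{i}\in Q^{*}$, i.e.\ $[e_{i},x]=\alpha_{i}(x)e_{i}$ for every $x\in Q$. Maximality of $T$ forces $\alpha_{1},\ldots ,\alpha_{k}$ to form a basis of $Q^{*}$; let $x_{1},\ldots ,x_{k}\in Q$ be its dual basis, so $\alpha_{i}(x_{j})=\delta_{ij}$ for $1\leq i,j\leq k$. Then $[e_{i},x_{i}]=e_{i}$ and $[e_{i},x_{j}]=0$ for $i\neq j$ in $\{1,\ldots ,k\}$. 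For $k+1\leq i\leq n$, writing $e_{i}$ (up to a scalar) as an iterated bracket of the generators and using additivity of weights gives $\alpha_{i}=\sum_{j=1}^{k}\alpha_{i,j}\alpha_{j}$, where $\alpha_{i,j}$ counts the occurrences of $e_{j}$ in that bracket. Evaluating at $x_{j}$ produces $[e_{i},x_{j}]=\alpha_{i,j}e_{i}$, completing the table.

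The main obstacle is the second step: arguing that $Q$ can be chosen so that $\operatorname{ad}Q|_{N}$ is a maximal torus of $\operatorname{Der}(N)$. This is where both the hypothesis $\dim Q=k$ and the maximal rank of $N$ are essential; once it is secured, the remainder is a routine unpacking of the joint weight decomposition, and the integrality of the $\alpha_{i,j}$ follows from counting occurrences in the chosen iterated-bracket expressions for the non-generator basis vectors.
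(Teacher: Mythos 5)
The paper does not actually prove this statement: Theorem~\ref{thm1} is imported verbatim from \cite{xosiyat} and used as a black box, so there is no in-paper proof to compare against. Judged on its own, your outline follows the route such a proof must take (weight decomposition of $N$ with respect to a maximal torus, a dual basis of $Q$, additivity of weights on iterated brackets), and several steps are sound as sketched: $[e_i,e_j]\in N^2$ is automatic once $e_{k+1},\dots,e_n$ span $N^2$; the linear independence of $\alpha_1,\dots,\alpha_k$ does follow from faithfulness of the torus plus additivity of weights; and the integrality of the $\alpha_{i,j}$ is correct bookkeeping once the non-generator basis vectors are chosen as iterated brackets of weight vectors.

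The genuine gap is the step you yourself flag as ``the main obstacle'' and then dispose of by appeal to ``standard structure theory'': the claim that the complement $Q$ may be replaced by one whose image in $\Der(N)$ is an abelian subalgebra of \emph{semisimple} derivations. Malcev--Mubarakzyanov theory does not give this. In general, writing $\operatorname{ad}_x|_N=s_x+n_x$ (Jordan decomposition inside $\Der(N)$), a change of complement $x\mapsto x-z$ with $z\in N$ only alters $\operatorname{ad}_x|_N$ by an inner derivation, so the nilpotent part can be removed exactly when it is inner modulo the torus --- which fails for arbitrary solvable $R$. This is precisely where the hypotheses $\dim Q=\dim N/N^2=k$ and maximality of the rank must do work: for instance, since $[Q,Q]\subseteq N$ acts trivially on $N/N^2$, the operators $\operatorname{ad}_x$ commute on $N/N^2$, their semisimple parts span a $k$-dimensional torus there, the resulting $k$ weights on the $k$-dimensional space $N/N^2$ are independent and hence have one-dimensional weight spaces, so the nilpotent parts vanish on $N/N^2$; one must then still kill the residual components of $[e_j,x]$ lying in $N^2$ by a further change of basis. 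Until an argument of this kind is supplied, your text is an accurate outline of the standard strategy rather than a proof, and the hardest point is exactly the one left unproved.
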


\begin{rem} Structure constants $\alpha_{i,j}$ are roots in the decomposition of $N$ with respect to its maximal torus.
\end{rem}

Further we shall also use the following result.

\begin{prop}\label{prop1} \cite{leger} Let $L$ be a Lie algebra over a field of characteristic $0$ such
that $Der(L)=Inder(L).$ If the center of the Lie algebra $L$ is non trivial, then $L$ is not solvable
and the radical of $L$ is nilpotent.
\end{prop}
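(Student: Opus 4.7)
The plan is to argue by contradiction: assume \(\Der(L) = \operatorname{Inder}(L)\) and \(Z(L) \neq 0\), and exhibit an outer derivation whenever \(L\) is solvable or \(\mathrm{Rad}(L)\) fails to be nilpotent. The central tool is the identification \(\Der(L)/\operatorname{Inder}(L) \cong H^{1}(L, L)\) together with the long exact sequence in Chevalley--Eilenberg cohomology associated with the central extension of \(L\)-modules
\[
0 \longrightarrow Z(L) \longrightarrow L \longrightarrow L/Z(L) \longrightarrow 0.
\]
Since \(Z(L)\) is a trivial \(L\)-module, \(H^{1}(L, Z(L)) \cong \Hom(L/[L,L], Z(L))\), and the hypothesis \(H^{1}(L, L) = 0\) forces the connecting homomorphism \(\delta\colon Z_{2}(L)/Z(L) \to \Hom(L/[L,L], Z(L))\), \(\bar y \mapsto (\bar x \mapsto [y, x])\), to be surjective. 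The image of \(\delta\) automatically lies in \(\Hom(L/[L,L],\, Z(L) \cap [L, L])\), so surjectivity already forces \(Z(L) \subseteq [L, L]\).

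For the solvability claim, assume \(L\) is solvable, so that \(L/[L, L] \neq 0\). If \(Z(L) \not\subseteq [L, L]\) the cohomological obstruction above is already violated; more concretely, for \(z \in Z(L) \setminus [L, L]\) and \(f \in L^{*}\) with \(f([L, L]) = 0\) and \(f(z) \neq 0\), the formula \(D(x) := f(x)\, z\) defines a derivation (centrality of \(z\) kills the bracket side, and \(f\) kills \([L, L]\)) whose image escapes \([L, L]\), so \(D \notin \operatorname{Inder}(L)\). The remaining case \(Z(L) \subseteq [L, L]\) is the main obstacle: here the bare cohomology of the central extension is insufficient, and one must invoke Lie's theorem (after passing to the algebraic closure) to put \(\operatorname{ad}(L)\) in triangular form and extract a nonzero semisimple outer derivation from a maximal torus of \(\Der(L)\) compatible with the filtration by the derived series.

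For the nilpotent-radical claim, suppose \(R = \mathrm{Rad}(L)\) is not nilpotent; then some \(x \in R\) has \(\operatorname{ad} x\) non-nilpotent, and the abstract Jordan decomposition supplies a nonzero semisimple derivation \(s := (\operatorname{ad} x)_{s}\) of \(L\). Using the Levi decomposition \(L = S \ltimes R\) together with the fact that \(\operatorname{ad}(R)\) acts by nilpotent operators on the nilradical, one shows \(s \neq \operatorname{ad} y\) for every \(y \in L\), yielding the required outer derivation. The principal difficulty throughout is the case \(Z(L) \subseteq [L, L]\) in the solvability step; overcoming it rests on the genuinely characteristic-zero structural input at the core of Leger's original argument.
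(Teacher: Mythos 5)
The paper does not prove Proposition~\ref{prop1} at all: it is quoted from Leger \cite{leger}, so there is no internal proof to compare you against, and I can only assess the proposal on its own. As it stands it is a plan in which both decisive steps are missing. What you actually carry out is correct: the identification $Der(L)/InDer(L)\cong H^1(L,L)$, the long exact sequence of $0\to Z(L)\to L\to L/Z(L)\to 0$, and the elementary observation that for $z\in Z(L)\setminus [L,L]$ and $f$ vanishing on $[L,L]$ with $f(z)\neq0$ the map $D=f\otimes z$ is a derivation whose image escapes $[L,L]$ and which is therefore outer (inner derivations have image in $[L,L]$). This legitimately reduces the non-solvability claim to the case $Z(L)\subseteq[L,L]$. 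But that case --- which you yourself identify as ``the main obstacle'' --- is then dispatched in one sentence saying one ``must invoke Lie's theorem \dots and extract a nonzero semisimple outer derivation from a maximal torus of $Der(L)$ compatible with the filtration by the derived series.'' No such derivation is constructed, and no reason is given why it could not be inner; the substance of the first assertion of the proposition is not proved.

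The sketch of the second assertion (nilpotency of the radical) has a sharper defect: the hypothesis $Z(L)\neq 0$ is never used in it, and the statement is false without that hypothesis. Take $L$ two-dimensional non-abelian with $[x,y]=y$: every derivation of $L$ is inner, $L$ is its own radical and is not nilpotent, and $ad_{x}$ is a nonzero semisimple \emph{inner} derivation --- exactly the operator $(ad_{x})_s$ your argument produces from a non-ad-nilpotent element of the radical. Hence the asserted step ``one shows $s\neq ad_{y}$ for every $y\in L$'' cannot follow from the Levi decomposition and the nilpotent action of $ad(R)$ on the nilradical alone; the center hypothesis must enter in an essential way, and the proposal gives no indication of where or how. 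Both halves of the proposition therefore remain unproved, and the genuinely characteristic-zero input you attribute to Leger's argument is precisely what is absent.
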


%Let $G$ be Lie algebra. Elements of $G$ $e_1,e_2,\ldots,e_k$ called nil-independent, if any lineary combination of the elements is not nilpotent element of algebra. The number of lineary nil-independent elements of algebra $G_n$ is denoted by $N(G_n).$
%
%\begin{thm}\label{thm1}\cite{Muba}
%Let $G_n$ be a solvable Lie algebra. Then $N(G_n)\leq \frac{n}{2}.$
%\end{thm}

%Let $R$ be a solvable Leibniz algebra. Then it can be decomposed in the form $R=N\dotplus Q,$ where $N$ is the nilradical and $Q$ is the complementary vector space. Since the square of a solvable algebra is a nilpotent ideal and the finite sum of nilpotent ideal too\cite{Ayupov}, then we get the nilpotency of the ideal $R^{2},$ i.e. $R^{2}\subseteq N$ and consequently, $Q^{2}\subseteq N.$

%\begin{thm}\label{thm1}\cite{casas}
%Let $R$ be a solvable Leibniz algebra and $N$ its nilradical. Then the dimension of the
%complementary vector space to $N$ is not greater than the maximal number of nil-independent derivations
%of $N$.
%\end{thm}

%\section{Main result}

%Let $N$ be the nilradical of the solvable $R$ Lie algebra that satisfies the condition of  Theorem \ref {thm1}. This type of algebras are called nilpotent algebras $N_{max}.$

Let $R$ be a solvable Lie algebra with nilradical $N$ and complementary subspace to nilradical $Q$, then $R=N\oplus Q$ as a direct sum of vector spaces.

We set
$$N_{max}=\{ N  \  | \ \ \mbox{there exists solvable } \ R \  \mbox{such that } dimQ=dimN/N^2\}.$$

\begin{thm}
Let $R=N_{max}\oplus Q$ is solvable Lie algebras such that $dimQ < dimN/N^2$. Then $R$ admits an outer derivation. \end{thm}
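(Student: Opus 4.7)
The plan is to exploit the dimension gap $s:=\dim Q<k:=\dim N/N^2$ by producing a torus derivation of $N$ that does not extend to an inner derivation of $R$. Maximal rank of $N$ means that $\mathrm{Der}(N)$ contains a maximal torus $T$ with $\dim T=k$, and the generator basis $e_1,\dots,e_k$ consists of simultaneous eigenvectors whose weights $\lambda_1,\dots,\lambda_k$ form a basis of $T^*$. Since $R$ is solvable, $[R,R]$ is nilpotent and lies in $N$, so $R/N$ is abelian; in particular the elements of $Q$ act on $N$ through pairwise commuting semisimple derivations. In the framework of Theorem~\ref{thm1} these assemble into an $s$-dimensional subspace $\mathrm{ad}_Q|_N\subseteq T$, and the numbers $\alpha_{i,j}$ are exactly the values $\lambda_i(\mathrm{ad}_{x_j}|_N)$.

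Since $s<k$, I would pick any $t\in T\setminus\mathrm{ad}_Q|_N$ and define $D\colon R\to R$ by $D|_N=t$ and $D|_Q=0$. Verifying $D\in\mathrm{Der}(R)$ splits into three checks. On $N\times N$ it is automatic because $t\in\mathrm{Der}(N)$. On $N\times Q$, for $n\in N$ and $x\in Q$, the fact that $t$ and $\mathrm{ad}_x|_N$ commute inside the abelian torus $T$ yields
\[
D([n,x])=-t(\mathrm{ad}_x(n))=-\mathrm{ad}_x(t(n))=[t(n),x]=[D(n),x]+[n,D(x)].
\]
On $Q\times Q$, choosing $Q$ abelian (the natural choice compatible with the torus realization of $\mathrm{ad}_Q|_N$) makes both sides of the Leibniz identity vanish.

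To show $D$ is outer, suppose toward a contradiction that $D=\mathrm{ad}_y$ for some $y=n_0+q_0$ with $n_0\in N$, $q_0\in Q$. Restricting to $N$ gives $t=\mathrm{ad}_{n_0}|_N+\mathrm{ad}_{q_0}|_N$. Now $\mathrm{ad}_{q_0}|_N\in T$ commutes with $t$, so $t-\mathrm{ad}_{q_0}|_N$ remains in $T$ and is therefore semisimple, while $\mathrm{ad}_{n_0}|_N$ is nilpotent because $N$ is nilpotent. Uniqueness of the Jordan decomposition then forces both summands to vanish, so $t=\mathrm{ad}_{q_0}|_N\in\mathrm{ad}_Q|_N$, contradicting the choice of $t$.

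The main technical obstacle I expect is the $Q\times Q$ check in the general case when $[Q,Q]\neq 0$: the identity $\mathrm{ad}_{[x_1,x_2]}|_N=[\mathrm{ad}_{x_1},\mathrm{ad}_{x_2}]|_N=0$ forces $[Q,Q]\subseteq Z(N)$, and one must then pick $t$ inside the common kernel of every $T$-weight occurring on $[Q,Q]$ while still avoiding the $s$-dimensional subspace $\mathrm{ad}_Q|_N$. The key count is that the number of such extra linear constraints, together with $s$, must remain strictly below $k$; this is where the explicit structure constants $\alpha_{i,j}$ and $\gamma_{i,j}^t$ supplied by Theorem~\ref{thm1} do the decisive work, and it is the step that most deserves careful bookkeeping.
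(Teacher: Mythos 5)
Your overall strategy --- use $s<k$ to find a torus derivation of $N$ not realized by $\mathrm{ad}_Q$, extend it by zero on $Q$, and check that it is an outer derivation of $R$ --- is exactly the paper's strategy in its main case, and your Jordan-decomposition argument for outerness is, if anything, cleaner than the paper's one-line justification. However, two steps you take for granted are genuine gaps, and they are precisely where the paper does its extra work. First, you assume that the elements of $Q$ act on $N$ by pairwise commuting semisimple derivations lying in the torus $T$. Neither assertion is automatic: $\mathrm{ad}_x|_N$ for $x\in Q$ is in general only ``semisimple plus nilpotent'', and $[\mathrm{ad}_x|_N,\mathrm{ad}_y|_N]=\mathrm{ad}_{[x,y]}|_N$ vanishes only when $[x,y]\in Z(N)$, which is not given. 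The paper handles this by writing $\mathrm{ad}_{x_i}=d_i+d_{n_i}$; if some nilpotent part $d_{n_{i_0}}$ fails to be an inner derivation of $N$ one exits immediately with an outer derivation, and otherwise $d_{n_i}=\mathrm{ad}_{z_i}$ with $z_i\in N$, so the replacement $x_i'=x_i-z_i$ puts $\mathrm{ad}_{x_i'}|_N$ into the maximal torus. Your construction needs some such normalization before the phrase ``pick $t\in T\setminus\mathrm{ad}_Q|_N$'' is even meaningful, and the same normalization is what your outerness argument needs to place $\mathrm{ad}_{q_0}|_N$ in $T$.

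The second and more serious gap is the $Q\times Q$ Leibniz check. You cannot ``choose $Q$ abelian'': $Q$ is only a vector-space complement to the nilradical, $[Q,Q]\subseteq N$, and $D([x,y])=t([x,y])$ need not vanish. You correctly identify this as the obstacle but leave it as ``careful bookkeeping'', and the dimension count you gesture at is not obviously available --- the weight constraints imposed by $[Q,Q]$, together with the $s$-dimensional subspace $\mathrm{ad}_Q|_N$ to be avoided, could a priori exhaust all $k$ dimensions of $T$. The paper's missing idea here is a case split on the center of $R$. If $Z(R)=\{0\}$, then after the normalization above the Jacobi identity shows $[x_i',x_j']$ is annihilated by every $\mathrm{ad}_z$, hence is central, hence zero, so $[Q,Q]=0$ and the check is trivial. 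If $Z(R)\neq\{0\}$, nothing is constructed at all: Leger's result (Proposition \ref{prop1}) shows that a Lie algebra with only inner derivations and nontrivial center cannot be solvable, so $R$ automatically admits an outer derivation. Without some device of this kind your argument does not close.
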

\begin{proof} Set $dimQ=s$ and $dimN/ N^2=k$ and consider two possible cases.

\emph{\textbf{Case 1.}} Let  $Center(R)=\{0\}$. Since $ad(R)$ is a solvable Lie algebra,
$ad_x$ for $x \in Q$ has upper triangle form and we can write as follows:
$$ad_{x_i}=d_i+d_{n_i},\ \quad 1\leq i\leq s,$$
where $d_i: R\rightarrow R$ is a diagonal derivation and $d_{n_i}:R\rightarrow R$ is nilpotent derivation whose matrix realization has is strictly upper triangle form.

If there exists $i_0$ such that $d_{n_{i_0}}\notin InDer(N)$. Then $d_{i_0}$ is outer derivation.

Suppose now that $d_{n_{i}}\in InDer(N)$ for any $1\leq i\leq s,$ that is, there are $z_i \in N$ such that $ad_{z_i}=d_{n_{i}}$.
Consequently, $ad_{x_i}-ad_{z_i}=ad_{x_i-z_i}$ lie in a maximal torus of $N$ (denoted by $Tor_{max}$). Since $dim Tor_{max}=k>s$, then there exists $d'\in Tor_{max}\setminus Span\{ad_{x_1-z_1},ad_{x_2-z_2},\ldots, ad_{x_s-z_s}\}.$

Taking the change of basis $Q$ as follows $x_i'=x_i-z_i, \ 1\leq i \leq s.$ Then $d'(Q)=0$.

From the equality
$$[z,[x_i',x_j']]=[[z,x_i'],x_j']-[[z,x_j'],x_i']=(ad_{x_j'}\circ ad_{x_i'}-ad_{x_i'}\circ ad_{x_j'})(z)=0$$
for any any $z\in R, \ 1\leq i,j\leq s$ we conclude that $[x_i',x_j']\in Center(R)=\{0\}$, hence $[Q,Q]=0$.

From the equalities
$$d'([n,x])-[n,d'(x)]-[d'(n),x]=d'([n,x])-[d'(n),x]=-d'(ad_{x}(n))+ad_{x}(d'(n))=[ad_{x},d'](n)=0$$
for any $n\in N, x\in Q$ and
$$d'([x_i',x_j'])-[x_i',d'(x_{j}')]-[d'(x_i'),x_j']=0,$$ we derive $d'\in Der(R)$.

Thus, we obtain that $d'$ is outer derivation(otherwise we get a contradiction with condition $dim Q=s$).

\emph{\textbf{Case b.}}  Let $Center(R)\neq \{0\}$. Suppose that $Der(R)=Inner(R)$, then by Proposition \ref{prop1} we conclude that algebra $R$ is nilpotent, that is a contradiction. Therefore, $Inner(R)\subsetneqq Der(R)$.
\end{proof}

\textbf{Acknowledgement.}
The authors are grateful to Professor B.A. Omirov for pointing out the property of considered algebras.

\end{document}